%% LyX 2.1.2 created this file.  For more info, see http://www.lyx.org/.
%% Do not edit unless you really know what you are doing.
\documentclass[oneside,english]{amsart}
\usepackage[T1]{fontenc}
\usepackage[latin9]{inputenc}
\usepackage{amsthm}
\usepackage{amstext}

\makeatletter

%%%%%%%%%%%%%%%%%%%%%%%%%%%%%% LyX specific LaTeX commands.
\newcommand{\lyxmathsym}[1]{\ifmmode\begingroup\def\b@ld{bold}
  \text{\ifx\math@version\b@ld\bfseries\fi#1}\endgroup\else#1\fi}

%%%%%%%%%%%%%%%%%%%%%%%%%%%%%% Textclass specific LaTeX commands.
\numberwithin{equation}{section}
\numberwithin{figure}{section}
\theoremstyle{plain}
\newtheorem{thm}{\protect\theoremname}
\ifx\proof\undefined
\newenvironment{proof}[1][\protect\proofname]{\par
\normalfont\topsep6\p@\@plus6\p@\relax
\trivlist
\itemindent\parindent
\item[\hskip\labelsep\scshape #1]\ignorespaces
}{%
\endtrivlist\@endpefalse
}
\providecommand{\proofname}{Proof}
\fi
\theoremstyle{plain}
\newtheorem{cor}[thm]{\protect\corollaryname}

%%%%%%%%%%%%%%%%%%%%%%%%%%%%%% User specified LaTeX commands.
% specify here the journal
% \journal{Journal of Number Theory}

% use this if you need line numbers
%\usepackage{lineno}

\makeatother

\usepackage{babel}
\providecommand{\corollaryname}{Corollary}
\providecommand{\theoremname}{Theorem}

\begin{document}

\title{On some general solutions of the simple Pell equation}

\author{Vladimir Pletser}

\address{European Space Research and Technology Centre, ESA-ESTEC P.O. Box
299, NL-2200 AG Noordwijk, The Netherlands E-mail: Vladimir.Pletser@esa.int }
\begin{abstract}
Two theorems are demonstrated giving analytical expressions of the
fundamental solutions of the Pell equation $X^{2}-DY^{2}=1$ found
by the method of continued fractions for two squarefree polynomial
expressions of radicands of Richaud-Degert type $D$ of the form $D=\left(f\left(u\right)\right)^{2}\pm2^{\alpha}n$,
where $D$, $n>0$, $\alpha\geq0,\in\mathbb{Z}$, and $f\left(u\right)>0,\in\mathbb{Z}$,
any polynomial function of $u\in\mathbb{Z}$ such that $f\left(u\right)\equiv0\left(mod\,\left(2^{\alpha-1}n\right)\right)$
or {\normalsize{}$f\left(u\right)\equiv\left(2^{\alpha-2}n\right)\left(mod\,\left(2^{\alpha-1}n\right)\right)$.}{\normalsize \par}

\textbf{Keywords}: Continued fraction development (11J70; 11Y65);
Solutions of Pell equation (11D09)
\end{abstract}

\maketitle

\section{\noindent Introduction}

\noindent Pell equations of the general form

\noindent 
\begin{equation}
X^{2}-DY^{2}=N\label{eq:3-1-1}
\end{equation}
with $X,Y,N\in\mathbb{Z}$ and squarefree $D>0,\in\mathbb{Z}$, have
been investigated in various forms since long, already by Indian and
Greek mathematicians and further in the 17th and 18th centuries by
Pell, Brouckner, Lagrange, Euler and others (see historical accounts
in \cite{key-6}, \cite{key-8}%
\footnote{\cite{key-8} lists 138 references of articles on Pell equations from
1658 to 1943.%
}, \cite{key-2}, \cite{key-5}) and are treated in several classical
text books (see e.g. \cite{key-1a}, \cite{key-7}, \cite{key-3},
\cite{key-1} and references therein). 

\noindent For $N=1$, the simple Pell equation reads classically 

\noindent 
\begin{equation}
X^{2}-DY^{2}=1\label{eq:3-1}
\end{equation}
Beside the obvious trivial solution $(X_{t},Y_{t})=(1,0)$, a whole
infinite branch of solutions exists for $n>0,\in\mathbb{\mathbb{Z}}$
given by
\begin{eqnarray}
X_{n} & = & \frac{\left(X_{1}+\sqrt{D}Y_{1}\right)^{n}+\left(X_{1}-\sqrt{D}Y_{1}\right)^{n}}{2}\nonumber \\
Y_{n} & = & \frac{\left(X_{1}+\sqrt{D}Y_{1}\right)^{n}-\left(X_{1}-\sqrt{D}Y_{1}\right)^{n}}{2\sqrt{D}}\label{eq:5-1}
\end{eqnarray}
where $(X_{1},Y_{1})$ is the fundamental solution to (\ref{eq:3-1}),
i.e. the smallest integer solution ($X_{1}>1,Y_{1}>0,\in\mathbb{Z}^{+}$)
different from the trivial solution. Robertson \cite{key-9} lists
five methods to find the fundamental solution $(X_{1},Y_{1})$. Among
these, the classical method introduced by Lagrange \cite{key-10},
based on the continued fraction expansion of the quadratic irrational
$\sqrt{D}$, is at the heart of several other methods (see also \cite{key-11})
.

\noindent In this paper, we demonstrate two theorems yielding analytical
expressions of the fundamental solutions $(X_{1},Y_{1})$ of the simple
Pell equation for squarefree polynomial expressions of radicands of
Richaud-Degert type $D$ of the form $D=\left(f\left(u\right)\right)^{2}\pm2^{\alpha}n$
where $D,n\in\mathbb{Z}^{+}$, $\alpha\geq0\in\mathbb{Z}^{*}$ and
$f\left(u\right)$ any polynomial function of $u\in\mathbb{Z}$ with
$f\left(u\right)>0,\in\mathbb{Z}^{+}$ such that first, $f\left(u\right)\equiv0\left(mod\,\left(2^{\alpha-1}n\right)\right)$
and second, $f\left(u\right)\equiv\left(2^{\alpha-2}n\right)\left(mod\,\left(2^{\alpha-1}n\right)\right)$.

\section{\noindent Solutions of the Pell equation by the continued fraction
method}

\noindent Quadratic irrationals $x$ of the form 
\begin{equation}
x=\frac{\sqrt{D}+b}{c}\label{eq:1-1}
\end{equation}
where $b$, $c\neq0$ and $D>0,\in\mathbb{Z}$, $x\in\mathbb{R}$,
and $D$ is not a perfect square ($\sqrt{D}\notin\mathbb{\mathbb{Z}}$),
have been studied extensively (see e.g. \cite{key-12}, \cite{key-13},
\cite{key-14}, \cite{key-4}). It is known also that a real number
$x$ has an eventually periodic regular continued fraction expansion
if and only if $x$ is a quadratic irrational. It yields that for
every squarefree $D$, there exists $r>0,\in\mathbb{Z}^{+}$ such
that the regular continued fraction expansion of $\sqrt{D}$ is given
by 
\begin{equation}
\sqrt{D}=[a_{0};\overline{a_{1},\text{\dots},a_{r},2a_{0}}]\label{eq:2-1}
\end{equation}
where $a_{0}=\left\lfloor \sqrt{D}\right\rfloor $, the greatest integer
$\leq\sqrt{D}$; the $a_{i}$ ($1\leq i\leq r$, where $i$ and $a_{i}>0,\in\mathbb{Z}^{+}$)
are the partial denominators of the continued fraction expansion;
the finite sequence $a_{1},\text{\dots},a_{r}$ is symmetric (so $a_{j}=a_{r-j+1}$
for $1\leq j\leq\left\lfloor r/2\right\rfloor ,$ with $j>0,\in\mathbb{Z}^{+}$);
and the bar indicates the period of length $r+1$. We use then the
algorithm given by Sierpinski \cite{key-4} to calculate the continued
fraction development of $\sqrt{D}$, that can be summarized as follows.
Let $b_{i}$ and $c_{i}>0,\in\mathbb{Z}$, $x_{i}\in\mathbb{R}$ irrational
numbers, $a_{i}=\left\lfloor x_{i}\right\rfloor $ for $i\geq1$,
the partial denominators $a_{i}$ of the continued fraction are then
successively
\begin{equation}
a_{0}=\left\lfloor \sqrt{D}\right\rfloor \Longrightarrow\sqrt{D}=a_{0}+\frac{1}{x_{1}}=a_{0}+\frac{c_{1}}{\sqrt{D}+b_{1}},\,\,\, b_{1}=a_{0},\,\,\, c_{1}=D-b_{1}^{2}\label{eq:3-1-2}
\end{equation}
\[
a_{i}=\left\lfloor x_{i}\right\rfloor \Longrightarrow x_{i}=a_{i}+\frac{1}{x_{i+1}}=a_{i}+\left(\frac{\sqrt{D}+b_{i}}{c_{i}}-a_{i}\right)=a_{i}+\frac{c_{i+1}}{\sqrt{D}+b_{i+1}},
\]
\begin{equation}
b_{i+1}=a_{i}c_{i}-b_{i},\,\,\, c_{i+1}=\frac{D-b_{i+1}^{2}}{c_{i}}\label{eq:4-2}
\end{equation}
(see \cite{key-4}, p.313, for more details). Therefore $a_{i-1}$,
$b_{i}$ and $c_{i}$ can be calculated successively by 
\begin{equation}
a_{i-1}=\left\lfloor \frac{a_{0}+b_{i-1}}{c_{i-1}}\right\rfloor ,\,\,\, b_{i}=a_{i-1}c_{i-1}-b_{i-1},\,\,\, c_{i}=\frac{D-b_{i}^{2}}{c_{i-1}}\label{eq:5-1-1}
\end{equation}
with $b_{0}=0$ and $c_{0}=1$, until $a_{i}$ is found such as $a_{i}=a_{r+1}=2a_{0}$. 

\noindent The fundamental solution $(X_{1},Y_{1})$ of the simple
Pell equation (\ref{eq:3-1}) is found for a particular value of $D$
by computing the $r$\textsuperscript{th}convergent $\left(p_{r}/q_{r}\right)$
of the continued fraction $[a_{0};\overline{a_{1},\text{\dots},a_{r},2a_{0}}]$
of $\sqrt{D}$. The convergents $p_{i}$ and $q_{i}$ can be found
by the recurrence relations
\begin{equation}
p_{i}=a_{i}p_{i-1}+p_{i-2}\,\,;\,\,\,\,\, q_{i}=a_{i}q_{i-1}+q_{i-2}\label{eq:4}
\end{equation}
with $p_{-1}=1$, $p_{0}=a_{0}$, $q_{-1}=0$, $q_{0}=1$. The fundamental
solution $(X_{1},Y_{1})$ is then $(p_{r},q_{r})$ if $r\equiv1(mod\,2)$,
or $(p_{2r+1},q_{2r+1})$ if $r\equiv0(mod\,2)$, with $X_{1}$ and
$Y_{1}$ forming an irreducible fraction $\left(X_{1}/Y_{1}\right)$,
where, to recall, $r+1$ is the period length of the continued fraction
expansion of $\sqrt{D}$.

\section{\noindent Theorems on general solutions of the simple Pell equation}

\noindent A first theorem is demonstrated allowing to find the convergents
$\left(p_{r}/q_{r}\right)$ of $\sqrt{D}$ for $D=\left(f\left(u\right)\right)^{2}\pm2^{\alpha}n$
where $n>0$ and $\alpha\geq0,\in\mathbb{Z}$, and $f\left(u\right)$
is any polynomial function of $u\in\mathbb{\mathbb{Z}}$, taking only
positive integer values and such that $f\left(u\right)\equiv0\left(mod\,\left(2^{\alpha-1}n\right)\right)$
for $\forall u\in\mathbb{\mathbb{Z}}$.
\begin{thm}
\noindent For squarefree $D$, $n$ and $r>0,\in\mathbb{Z}$ and $\alpha\geq0,\in\mathbb{Z}$,
let $f\left(u\right)$ be any polynomial function of $u\in\mathbb{Z}$
such that $f\left(u\right)>0,\in\mathbb{Z}$ for $\forall u\in\mathbb{\mathbb{Z}}$
and $f$ is written instead of $f\left(u\right)$ for convenience;
if 
\begin{equation}
D=f^{2}\pm2^{\alpha}n\label{eq:6}
\end{equation}
and 
\begin{equation}
f\equiv0\left(mod\,\left(2^{\alpha-1}n\right)\right)\label{eq:6-1}
\end{equation}
then the convergents $\left(p_{r}/q_{r}\right)$ of $\sqrt{D}$ are
\begin{equation}
\frac{p_{r}}{q_{r}}=\left(\frac{\frac{f^{2}}{2^{\alpha-1}n}\pm1}{\frac{f}{2^{\alpha-1}n}}\right)\label{eq:7}
\end{equation}
and for the case of the minus sign in front of $2^{\alpha}n$ in (\ref{eq:6}),
$f^{2}>2^{\alpha}n$ and $\alpha$ and $n$ not taking simultaneously
the values $\alpha=0$ and $n=1$.
\end{thm}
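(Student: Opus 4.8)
The plan is to trade the continued-fraction bookkeeping for a short piece of algebra plus a minimality argument, using the classical fact (reflected in (\ref{eq:5-1})) that the positive solutions of (\ref{eq:3-1}) are exactly the powers of the fundamental one. Set $g=f/(2^{\alpha-1}n)$; the congruence (\ref{eq:6-1}) says precisely that $g\in\mathbb{Z}$, and $f>0$ gives $g\geq1$. Since $2^{\alpha-1}n=f/g$ and $2^{\alpha}n=2f/g$, the candidate (\ref{eq:7}) is $(X,Y)=(gf\pm1,\,g)$, and the first step is the identity
\[
(gf\pm1)^{2}-D\,g^{2}=g^{2}f^{2}\pm2gf+1-(f^{2}\pm\tfrac{2f}{g})g^{2}=1 .
\]
Hence $(gf\pm1,g)$ is a positive solution of (\ref{eq:3-1}), and $\gcd(gf\pm1,g)=1$ (a common divisor of $g$ and $gf\pm1$ divides $\pm1$), so $X/Y$ is already the irreducible fraction demanded in Section~2.

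By completeness of the family (\ref{eq:5-1}), this candidate equals $(X_{j},Y_{j})$ for some $j\geq1$, where $X_{j}+\sqrt{D}\,Y_{j}=\varepsilon^{j}$ and $\varepsilon=X_{1}+\sqrt{D}\,Y_{1}>1$ is the fundamental unit. The entire theorem therefore reduces to showing $j=1$, i.e. that $(gf\pm1,g)$ is the fundamental solution; the connection to the convergent $(p_{r}/q_{r})$ is then supplied by the correspondence recalled at the end of Section~2.

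The minimality step is the heart of the argument, and I would split it on $g$. If $g=1$ the candidate is $(f\pm1,1)$, and since $Y=1$ is the least possible value of $Y$, it is automatically fundamental (indeed $D+1=(f\pm1)^{2}$). If $g\geq2$ and we had $j\geq2$, then $g=Y_{j}\geq Y_{2}=2X_{1}Y_{1}$, and $X_{1}>\sqrt{D}\,Y_{1}$ together with $Y_{1}\geq1$ gives $Y_{2}>2\sqrt{D}$; a crude lower bound $2\sqrt{D}\geq g$ (for the plus sign $\sqrt{D}>f\geq g$ is immediate; for the minus sign one checks $D\geq f^{2}-f\geq f^{2}/4\geq g^{2}/4$, using $f\geq g$ because $2^{\alpha-1}n\geq1$ once the case $\alpha=0,n=1$ is excluded) then yields $g>g$, a contradiction, forcing $j=1$. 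I expect the delicate point to be exactly this minus-sign estimate in its smallest instances: the degenerate short-period radicands (such as $D=2$) are excluded precisely by the side hypotheses $f^{2}>2^{\alpha}n$, $r>0$ and the exclusion of $\alpha=0,n=1$, and one must confirm that every surviving configuration satisfies $2\sqrt{D}\geq g$.

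Finally, having identified $(gf\pm1,g)$ as the fundamental solution, it remains to see that it appears as the first-period convergent $(p_{r},q_{r})$ rather than $(p_{2r+1},q_{2r+1})$, which amounts to checking that $r$ is odd, equivalently that $X^{2}-DY^{2}=-1$ has no solution for these $D$. In the plus case this is transparent from Sierpinski's recurrence (\ref{eq:5-1-1}): from $a_{0}=f$ one gets $b_{1}=f$, $c_{1}=D-f^{2}=2^{\alpha}n$, then $a_{1}=g$, $b_{2}=f$, $c_{2}=1$, so $\sqrt{D}=[f;\overline{g,2f}]$ has even period length ($r=1$), and (\ref{eq:4}) gives $p_{1}/q_{1}=(gf+1)/g$, reproducing (\ref{eq:7}) directly. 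For the minus sign the same recurrence applies but $a_{0}$ and the period length vary with $g$ and $n$, so verifying the period parity there is the one remaining piece of bookkeeping; the uniform minimality argument above is what makes that bookkeeping unnecessary for the value of the solution itself.
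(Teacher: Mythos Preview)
Your approach is genuinely different from the paper's. The paper never argues minimality: it simply runs Sierpinski's recurrence in both sign cases, exhibits the full period of the continued fraction of $\sqrt{D}$ (period $[f;\overline{g,2f}]$ with $r=1$ for the plus sign, and $[f-1;\overline{1,\,g-2,\,1,\,2(f-1)}]$ with $r=3$ for the minus sign, where $g=f/(2^{\alpha-1}n)$), and then reads off $p_r/q_r$ from the convergent recursion. Your route---verify algebraically that $(gf\pm1,g)$ solves $X^2-DY^2=1$, then use $Y_2=2X_1Y_1>2\sqrt{D}\geq g$ to force $j=1$---is slicker and more conceptual for locating the fundamental solution, and your bounds (in particular $D\geq f^2-f\geq f^2/4\geq g^2/4$ in the minus case once $(\alpha,n)=(0,1)$ is excluded) do go through.

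The gap is that the theorem is stated about the convergent $(p_r,q_r)$, not about the fundamental solution, and the two coincide only when $r$ is odd. You handle the plus sign by redoing the paper's computation, but for the minus sign you label the parity check ``bookkeeping'' and stop. That step is not optional: if the period length were odd (equivalently, if $X^2-DY^2=-1$ were solvable), then $(p_r,q_r)$ would \emph{not} be your $(gf-1,g)$ and the theorem's formula would fail. Ruling this out from your side would require an independent argument that $-1$ is not a norm for these $D$, which is not obviously easier than the four-step continued-fraction expansion the paper carries out. So what your argument buys is a clean identification of $(X_1,Y_1)$; what the paper's direct expansion buys---and what you still owe---is the explicit period (hence the value of $r$ and its parity) that pins the answer to $p_r/q_r$.
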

\noindent This theorem is demonstrated by calculating the continued
fraction development of $\sqrt{D}$ following Sierpinski's algorithm
(\ref{eq:3-1-2}) to (\ref{eq:5-1-1}) and the convergents $\left(p_{r}/q_{r}\right)$
of $\sqrt{D}$ by (\ref{eq:4}).
\begin{proof}
\noindent For squarefree $D$, $n$ and $r>0,\in\mathbb{Z}$; $\alpha\geq0,\in\mathbb{Z}$;
$f\left(u\right)$ any polynomial of $u\in\mathbb{Z}$ taking values
$f=f\left(u\right)>0,\in\mathbb{Z}$.

\noindent (i) For the case of the plus sign in front of $2^{\alpha}n$
in (\ref{eq:6}), i.e.
\begin{equation}
D=f^{2}+2^{\alpha}n\label{eq:8}
\end{equation}
one has
\begin{equation}
f^{2}<f^{2}+2^{\alpha}n<\left(f+1\right)^{2}\label{eq:9}
\end{equation}
under the condition that
\begin{equation}
f\ge2^{\alpha-1}n\label{eq:10}
\end{equation}
It yields successively $a_{0}=\left\lfloor \sqrt{f^{2}+2^{\alpha}n}\right\rfloor =f$,
$b_{1}=f$, $c_{1}=2^{\alpha}n$; $a_{1}=\frac{f}{2^{\alpha-1}n}$
with the condition, if $\alpha>0$,
\begin{equation}
f\equiv0\left(mod\,\left(2^{\alpha-1}n\right)\right)\label{eq:11}
\end{equation}
or if $\alpha=0$,
\begin{equation}
2f\equiv0\left(mod\, n\right)\label{eq:12}
\end{equation}
 It yields then $b_{2}=f$, $c_{2}=1$; $a_{2}=2f=2a_{0}$, giving
the continued fraction
\begin{equation}
\sqrt{f^{2}+2^{\alpha}n}=\left[f;\left(\frac{f}{2^{\alpha-1}n}\right),2f,\text{\dots}\right]\label{eq:13}
\end{equation}
with $r=1$, which, with (\ref{eq:4}), yields immediately 
\begin{equation}
\frac{p_{r}}{q_{r}}=\left(\frac{\frac{f^{2}}{2^{\alpha-1}n}+1}{\frac{f}{2^{\alpha-1}n}}\right)\label{eq:7-1}
\end{equation}

\noindent (ii) For the case of the minus sign in front of $2^{\alpha}n$
in (\ref{eq:6}), i.e.
\begin{equation}
D=f^{2}-2^{\alpha}n\label{eq:15}
\end{equation}
and $\alpha$ and $n$ not simultaneously $\alpha=0$ and $n=1$,
one has
\begin{equation}
\left(f-1\right)^{2}<f^{2}-2^{\alpha}n<f^{2}\label{eq:16}
\end{equation}
under the condition that
\begin{equation}
f>2^{\alpha-1}n\label{eq:17}
\end{equation}
It yields successively $a_{0}=\left\lfloor \sqrt{f^{2}-2^{\alpha}n}\right\rfloor =\left(f-1\right)$,
$b_{1}=\left(f-1\right)$, 

\noindent $c_{1}=\left(2f-2^{\alpha}n-1\right)$; $a_{1}=1$ with
the condition
\begin{equation}
f>2^{\alpha}n\label{eq:18}
\end{equation}
One has then $b_{2}=\left(f-2^{\alpha}n\right)$, $c_{2}=2^{\alpha}n$;
$a_{2}=\left(\frac{f}{2^{\alpha-1}n}-2\right)$ with the condition
(\ref{eq:11}) if $\alpha>0$ or (\ref{eq:12}) if $\alpha=0$. It
yields then $b_{3}=\left(f-2^{\alpha}n\right)$, $c_{3}=\left(2f-2^{\alpha}n-1\right)$;
$a_{3}=1$, $b_{4}=\left(f-1\right)$, $c_{4}=1$; $a_{4}=2\left(f-1\right)=2a_{0}$,
giving the continued fraction
\begin{equation}
\sqrt{f^{2}-2^{\alpha}n}=\left[\left(f-1\right);1,\left(\frac{f}{2^{\alpha-1}n}-2\right),1,2\left(f-1\right),\text{\dots}\right]\label{eq:19}
\end{equation}
with $r=3$, which, with (\ref{eq:4}), yields immediately
\begin{equation}
\frac{p_{r}}{q_{r}}=\left(\frac{\frac{f^{2}}{2^{\alpha-1}n}-1}{\frac{f}{2^{\alpha-1}n}}\right)\label{eq:7-1-1}
\end{equation}

\end{proof}
\noindent Note that the conditions (\ref{eq:6-1}) and $f>0$ means
that $\exists\, k>0,\in\mathbb{Z}$ such that $f=2^{\alpha-1}kn$,
yielding simpler expressions of $D$ and of the convergents $\left(p_{r}/q_{r}\right)$
of $\sqrt{D}$ as
\begin{equation}
D=2^{\alpha}n\left(2^{\alpha-2}k^{2}n\pm1\right)\label{eq:7-2}
\end{equation}

\noindent 
\begin{equation}
\frac{p_{r}}{q_{r}}=\left(\frac{2^{\alpha-1}k^{2}n\pm1}{k}\right)\label{eq:7-3}
\end{equation}

\noindent The following corollary give particular relations deduced
from this Theorem, some of them being already given by other authors
(see e.g. \cite{key-4}).
\begin{cor}
\noindent For squarefree $D$, $d$, $m$, $n$ and $r>0,\in\mathbb{Z}$;
$\alpha\geq0,\in\mathbb{Z}$; and for appropriate conditions as in
Theorem 1, the following relations hold for the convergents $\left(p_{r}/q_{r}\right)$
of $\sqrt{D}$ : 

\noindent (i) for $D=\left(d^{2}-1\right)$,
\begin{equation}
\frac{p_{r}}{q_{r}}=\left(\frac{d}{1}\right)\label{eq:21}
\end{equation}
(ii) for $D=\left(d^{2}+1\right)$, 
\begin{equation}
\frac{p_{r}}{q_{r}}=\left(\frac{2d^{2}+1}{2d}\right)\label{eq:22}
\end{equation}
 (iii) for $D=\left(d^{2}\pm2\right)$,
\begin{equation}
\frac{p_{r}}{q_{r}}=\left(\frac{d^{2}\pm1}{d}\right)\label{eq:23}
\end{equation}
(iv) for$D=m^{2}\left(d^{2}\pm2^{\alpha}n\right)$, 
\begin{equation}
\frac{p_{r}}{q_{r}}=\left(\frac{\frac{d^{2}}{2^{\alpha-1}n}\pm1}{\frac{d}{2^{\alpha-1}mn}}\right)\label{eq:24}
\end{equation}
(v) for $D=\left(\left(md^{\beta}\right)^{2}\pm2^{\alpha}n\right)$,
\begin{equation}
\frac{p_{r}}{q_{r}}=\left(\frac{\frac{\left(md^{\beta}\right)^{2}}{2^{\alpha-1}n}\pm1}{\frac{md^{\beta}}{2^{\alpha-1}n}}\right)\label{eq:25}
\end{equation}
(vi) for$D=n\left(nd^{2}\pm1\right)$, 
\begin{equation}
\frac{p_{r}}{q_{r}}=\left(\frac{2nd^{2}\pm1}{2d}\right)\label{eq:26}
\end{equation}
(vii) for$D=d\left(m^{2}d\pm2\right)$,
\begin{equation}
\frac{p_{r}}{q_{r}}=\left(\frac{m^{2}d\pm1}{m}\right)\label{eq:28}
\end{equation}
(viii) for $D=d\left(d+4\right)$, $d\equiv0\left(mod\,2\right)$
and $d\geq2$,
\begin{equation}
\frac{p_{r}}{q_{r}}=\left(\frac{\frac{\left(d+2\right)^{2}}{2}-1}{\frac{d+2}{2}}\right)\label{eq:29}
\end{equation}
(ix) for $D=\left(\left(d\pm2\right)^{2}\pm4\right)$, $d\equiv0\left(mod\,2\right)$
and $d\geq2$,
\begin{equation}
\frac{p_{r}}{q_{r}}=\left(\frac{\frac{\left(d\pm2\right)^{2}}{2}\pm1}{\frac{d\pm2}{2}}\right)\label{eq:30}
\end{equation}
\end{cor}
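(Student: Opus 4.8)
The plan is to read every one of the nine relations as a direct specialization of the master formula (\ref{eq:7}) of Theorem 1, so that no fresh continued-fraction computation is required. For each item I would first rewrite the stated radicand $D$ into the canonical Richaud--Degert shape $f^{2}\pm2^{\alpha}n$, read off the three parameters $f$, $\alpha$, $n$, verify that the divisibility hypothesis (\ref{eq:6-1}) (or, when $\alpha=0$, its variant (\ref{eq:12})) together with the size/sign constraints (\ref{eq:9}) and (\ref{eq:16}) are satisfied, and then substitute those values into (\ref{eq:7}). The corollary thus reduces to a bookkeeping exercise of matching parameters to the hypotheses of Theorem 1.

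Concretely, the parameter dictionary I would record is the following. For (ii) take $f=d$, $\alpha=0$, $n=1$ with the plus sign, so $2^{\alpha-1}n=\frac{1}{2}$ and (\ref{eq:7}) returns $(2d^{2}+1)/(2d)$; for (iii) take $f=d$, $\alpha=1$, $n=1$, giving $2^{\alpha-1}n=1$ and the value $(d^{2}\pm1)/d$; for (v) take $f=md^{\beta}$ directly; for (vi) rewrite $D=(nd)^{2}\pm n$, so $f=nd$, $\alpha=0$, and $n$ as given; for (vii) rewrite $D=(md)^{2}\pm 2d$, so $f=md$, $\alpha=1$, $n=d$; for (viii) complete the square as $D=(d+2)^{2}-4$, so $f=d+2$, $\alpha=2$, $n=1$ with the minus sign; and for (ix) take $f=d\pm2$, $\alpha=2$, $n=1$. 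In the square-factor case (iv) I would write $D=m^{2}\left(d^{2}\pm2^{\alpha}n\right)=(md)^{2}\pm2^{\alpha}\left(m^{2}n\right)$ and apply Theorem 1 with $f=md$ and with $n$ replaced by $m^{2}n$; the factors of $m$ then cancel in the numerator of (\ref{eq:7}) but survive in the denominator, producing precisely (\ref{eq:24}). I would also remark that (\ref{eq:24}) is an identity about the convergents coming purely from the recursion of Theorem 1 and does not actually invoke squarefreeness, so the apparent tension between the form $m^{2}(\dots)$ and the standing squarefree hypothesis is immaterial for the convergent statement.

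The step I expect to require the most care is confirming that each specialization genuinely lies inside the admissible range of Theorem 1, rather than performing the substitution itself. Three points stand out. First, item (i), $D=d^{2}-1$, corresponds to $\alpha=0$, $n=1$ with the minus sign, which is exactly the case Theorem 1 excludes; I would instead verify it directly, noting that $(X_{1},Y_{1})=(d,1)$ solves $X^{2}-(d^{2}-1)Y^{2}=1$ and that the algorithm of Section 2 yields $\sqrt{d^{2}-1}=[(d-1);\overline{1,2(d-1)}]$ with first convergent $d/1$. Second, the cases with $\alpha=0$ (items (i), (ii), (vi)) force the formal value $2^{\alpha-1}n=n/2$, so I must check that the congruence actually used there is the doubled condition (\ref{eq:12}), $2f\equiv0\,(\mathrm{mod}\,n)$, which indeed holds in each instance. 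Third, the parity hypotheses $d\equiv0\,(\mathrm{mod}\,2)$ in (viii) and (ix) are precisely what guarantee that $f=d\pm2$ is even, i.e. that (\ref{eq:6-1}) with $2^{\alpha-1}n=2$ holds, while the size constraints $f^{2}>2^{\alpha}n$ and (\ref{eq:18}) must be re-examined at the lower endpoint $d=2$ of (viii), where (\ref{eq:18}) degenerates to an equality; a direct check there confirms that the convergent formula still returns the correct value $7/2$, so the endpoint must be validated separately.
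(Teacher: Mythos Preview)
Your proposal is correct and follows essentially the same route as the paper: both treat every item as a specialization of the master formula (\ref{eq:7}) of Theorem 1 by choosing $f$, $\alpha$, $n$ appropriately, and both handle item (i) separately because the case $\alpha=0$, $n=1$ with the minus sign is excluded from Theorem 1 (the paper observes that the period collapses to $[(d-1);\overline{1,2(d-1)}]$, exactly as you do). Your parameter dictionary matches the paper's item by item; in fact you are somewhat more careful than the paper in flagging the $\alpha=0$ congruence (\ref{eq:12}), the non-squarefree form in (iv), and the boundary value $d=2$ in (viii), all of which the paper passes over without comment.
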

\begin{proof}
\noindent For squarefree $D$, $d$, $d'$, $m$, $n$, $n'$ and
$r>0,\in\mathbb{Z}$; $t$, $\alpha$ and $\beta\geq0,\in\mathbb{Z}$;
let 
\begin{equation}
f\left(d\right)=\left(md^{\beta}\pm t\right)\label{eq:30-1}
\end{equation}
 be a polynomial of $d$ such that $f\left(d\right)>0,\in\mathbb{Z}$.
One has then the following from Theorem 1, yielding the respective
convergents by (\ref{eq:4}): 

\noindent (i) let $t=\alpha=0$, and $m=n=\beta=1$ in (\ref{eq:30-1})
and (\ref{eq:6}), yielding $f\left(d\right)=d$ and $D=d^{2}-1$.
Then, in the proof (ii) of Theorem 1, $a_{2}=2\left(d-1\right))=2a_{0}$
for $d\geq2$, and the continued fraction reduces to $\sqrt{d{}^{2}-1}=\left[\left(d-1\right);1,2\left(d-1\right),\ldots\right]$,
with $r=1$, yielding directly (\ref{eq:21}) by (\ref{eq:4})%
\footnote{In \cite{key-4} p.321, it is erroneously stated that \textquotedblleft $\sqrt{a{}^{2}+1}=\left[a;a,2a,\ldots\right]$
(instead of $\left[a;2a,\text{\dots}\right]$) for any natural number
a\textquotedblright{} and \textquotedblleft $\sqrt{\left(na{}^{2}\right)^{2}+1}$
(instead of $\sqrt{\left(na\right)^{2}+1}$ ) $=\left[\left(na-1\right);1,2n-2,1,2\left(na-1\right),\text{\dots}\right]$
for natural numbers $a$ and $n>1$\textquotedblright{}%
}. Note that (\ref{eq:7}) yields a following convergent for $r\lyxmathsym{\textquoteright}=2r+1=3$,
which provides a next solution to the Pell equation. 

\noindent (ii) let $t=\alpha=0$, and $m=n=\beta=1$ in (\ref{eq:30-1})
and (\ref{eq:6}), yielding $f\left(d\right)=d$ and $D=d^{2}+1$;
then (\ref{eq:7}) yields immediately (\ref{eq:22}); 

\noindent (iii) let $t=0$, and $m=n=\alpha=\beta=1$ in (\ref{eq:30-1})
and (\ref{eq:6}) yielding $f\left(d\right)=d$ and $D=d^{2}\pm2$,
giving immediately (\ref{eq:23}) from (\ref{eq:7}), with, for the
minus sign, the condition that $d\geq3$; if $d=2$, the continued
fraction (\ref{eq:19}) reduces to $\sqrt{2}=\left[1;2,\ldots\right]$
with $r=1$, yielding $\left(p_{1}/q_{1}\right)=\left(3/2\right)$
by (\ref{eq:4}). 

\noindent (iv) let $t=0$, $\beta=1$ and $n=n'm^{2}$ in (\ref{eq:30-1})
and (\ref{eq:6}), yielding $f\left(d\right)=md$ and $D=m^{2}\left(d^{2}\pm2^{\alpha}n'\right)$
and dropping the $'$; then (\ref{eq:24}) is immediate from (\ref{eq:7})
with the condition that $d\equiv0\left(mod\,\left(2^{\alpha-1}nm\right)\right)$;

\noindent (v) let $t=0$ in (\ref{eq:30-1}), yielding $f\left(d\right)=md^{\beta}$;
then (\ref{eq:25}) is immediate from (\ref{eq:7}), with the condition
that $md^{\beta}\equiv0\left(mod\,\left(2^{\alpha-1}n\right)\right)$;

\noindent (vi) setting $t=\alpha=0$, $\beta=1$ and $m=n$ in (\ref{eq:25})
yields immediately (\ref{eq:26});

\noindent (vii) let $t=0$ and $\beta=1$ and either $\alpha=1$ and
$n=d$, or $\alpha=0$ and $n=2d$, then (\ref{eq:28}) is immediate
from (\ref{eq:7}); 

\noindent (viii) and (ix) let $m$, $n$ and $\beta=1$, $t$ and
$\alpha=2$, and $d\equiv0\left(mod\,2\right)$ with $d\geq2$; then
(\ref{eq:29}) and (\ref{eq:30}) are immediate from (\ref{eq:7}). 
\end{proof}
\noindent Another theorem is demonstrated allowing to calculate the
convergents $\left(p_{r}/q_{r}\right)$ of $\sqrt{D}$ for $D=\left(f\left(u\right)\right)^{2}\pm2^{\alpha}n$
where $f\left(u\right)$ is any polynomial function of $u$, taking
only positive integer values and such that $f\left(u\right)\equiv\left(2^{\alpha-2}n\right)\left(mod\,\left(2^{\alpha-1}n\right)\right)$
for $\forall u\in\mathbb{\mathbb{Z}}$, with $n>0$ and $\alpha\geq0,\in\mathbb{Z}$. 
\begin{thm}
\noindent For squarefree $D$, $n$ and $r>0,\in\mathbb{Z}$ and $\alpha\geq0,\in\mathbb{Z}$,
let $f\left(u\right)$ be any polynomial function of $u\in\mathbb{Z}$
such that $f\left(u\right)>0,\in\mathbb{Z}$ for $\forall u\in\mathbb{\mathbb{Z}}$
and $f$ is written instead of $f\left(u\right)$ for convenience;
if
\begin{equation}
D=f^{2}\pm2^{\alpha}n\label{eq:31}
\end{equation}
and
\begin{equation}
f\equiv\left(2^{\alpha-2}n\right)\left(mod\,\left(2^{\alpha-1}n\right)\right)\label{eq:6-1-1}
\end{equation}
then the convergents $\left(p_{r}/q_{r}\right)$ of $\sqrt{D}$ are,
if $f\equiv0\left(mod\,2\right)$,
\begin{equation}
\frac{p_{r}}{q_{r}}=\left(\frac{\frac{f^{2}\left(f^{2}\text{\ensuremath{\pm}}2^{\alpha}n\right)}{2^{2\alpha-3}n^{2}}+1}{\frac{f\left(f^{2}\text{\ensuremath{\pm}}2^{\alpha-1}n\right)}{2^{2\alpha-3}n^{2}}}\right)\label{eq:32}
\end{equation}
and, if $f\equiv1\left(mod\,2\right)$, 
\begin{equation}
\frac{p_{r}}{q_{r}}=\left(\frac{\frac{f^{2}\left(f^{2}\pm3\times2^{\alpha-2}n\right)^{2}}{2^{3\alpha-5}n^{3}}\pm1}{\frac{f\left(f^{2}\pm2^{\alpha-2}n\right)\left(f^{2}\pm3\times2^{\alpha-2}n\right)}{2^{3\alpha-5}n^{3}}}\right)\label{eq:33}
\end{equation}
under the conditions for $D=\left(f{}^{2}+2^{\alpha}n\right)$ and
$f\equiv0$ or $1\left(mod\,2\right)$,
\begin{eqnarray}
\textnormal{\textit{if}} & 1\leq n\leq2^{2-\alpha} & \textnormal{\emph{\textit{then}}\,\,}f\ge2^{\alpha-1}n\label{eq:33-1}\\
\textnormal{\textit{if}} & n>2^{2-\alpha} & \textnormal{\textit{then}\,}\, f\ge\left(3\times2^{\alpha-2}n-1\right)\label{eq:33-2}
\end{eqnarray}
for $D=\left(f{}^{2}-2^{\alpha}n\right)$ and $f\equiv0\left(mod\,2\right)$,
\begin{eqnarray}
\textnormal{\textit{if}} & \alpha\geq2\textnormal{\textnormal{\textit{, or if} \ensuremath{\alpha=0}\textit{or} \ensuremath{1}\textit{and} \ensuremath{3n\geq2^{3-\alpha}}}} & \textnormal{\textit{then}\,\,}f\ge2^{\alpha}n\label{eq:33-3}\\
\textnormal{\textit{if}} & \alpha=0\textnormal{ \textit{or} \ensuremath{1}\textit{and} }3n<2^{3-\alpha} & \textnormal{\textit{then}\,}\, f\geq\left(2^{\alpha-2}n+2\right)\label{eq:33-4}
\end{eqnarray}
for $D=\left(f{}^{2}-2^{\alpha}n\right)$ and $f\equiv1\left(mod\,2\right)$,
\begin{eqnarray}
\textnormal{\textit{if}} & n\geq\left(2^{2-\alpha}+1\right) & \textnormal{\textit{then}\,\,}f\geq3\times2^{\alpha-2}n\label{eq:33-5}\\
\textnormal{\textit{if}} & 1\leq n\leq2^{2-\alpha} & \textnormal{\textit{then}\,}\, f>2^{\alpha-3}n+3\label{eq:33-6}
\end{eqnarray}
\end{thm}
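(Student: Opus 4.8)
The plan is to follow the strategy of the proof of Theorem 1 verbatim: execute Sierpinski's algorithm (\ref{eq:3-1-2})--(\ref{eq:5-1-1}) directly on $\sqrt{D}$ for $D=f^{2}\pm2^{\alpha}n$, now under hypothesis (\ref{eq:6-1-1}). The congruence (\ref{eq:6-1-1}) is equivalent to saying that $f$ is an \emph{odd} multiple of $h:=2^{\alpha-2}n$, i.e.\ $f=2^{\alpha-2}n\left(2k+1\right)$ for some $k\geq0,\in\mathbb{Z}$; I would substitute this form throughout so that the denominators $c_{i}$ and the numerators $a_{0}+b_{i}$ of the floor functions $a_{i}=\left\lfloor \left(a_{0}+b_{i}\right)/c_{i}\right\rfloor $ become explicit expressions in $k$, $n$ and $\alpha$ that can be evaluated exactly. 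This is where the parity of $f$ enters: whether $2^{\alpha-2}n\left(2k+1\right)$ is even or odd (which itself depends intricately on $\alpha$ and $n$) determines how the intermediate $c_{i}$ factor, and hence the period length $r$ and which of (\ref{eq:32}) or (\ref{eq:33}) results.

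First I would pin down $a_{0}=\left\lfloor \sqrt{f^{2}\pm2^{\alpha}n}\right\rfloor $. For the plus sign $a_{0}=f$ requires $2^{\alpha}n\leq2f$, i.e.\ $f\geq2^{\alpha-1}n$, which is exactly the small-$n$ bound (\ref{eq:33-1}); for the minus sign $a_{0}=f-1$ requires $2^{\alpha}n\leq2f-1$. The stronger bounds appearing in (\ref{eq:33-2})--(\ref{eq:33-6}) --- for instance $f\geq3\times2^{\alpha-2}n-1$ in (\ref{eq:33-2}) and $f\geq2^{\alpha}n$ in (\ref{eq:33-3}) --- signal that, outside the smallest parameter range, the binding positivity constraint is no longer the one fixing $a_{0}$ but one arising at a \emph{later} step of the expansion (note, e.g., that $3\times2^{\alpha-2}n-1>2^{\alpha-1}n$ precisely when $n>2^{2-\alpha}$, which is the very threshold separating (\ref{eq:33-1}) from (\ref{eq:33-2})). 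I would therefore verify, step by step, that every partial quotient $a_{i}$ produced by (\ref{eq:5-1-1}) is a positive integer exactly under the stated ranges of $n$ and $\alpha$, and that the quantity $f^{2}\pm3\times2^{\alpha-2}n$ occurring in (\ref{eq:33}) stays positive.

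Then I would iterate the recurrence (\ref{eq:5-1-1}) until the symmetric point $a_{i}=2a_{0}$ (equivalently $c_{i}=1$) is reached. I expect the sequence $\left(b_{i},c_{i}\right)$ to pass through values whose denominators, after clearing, are precisely $2^{2\alpha-3}n^{2}$ when $f$ is even and $2^{3\alpha-5}n^{3}$ when $f$ is odd --- matching the denominators displayed in (\ref{eq:32}) and (\ref{eq:33}) --- with the period being palindromic as guaranteed by (\ref{eq:2-1}). The palindrome symmetry is a convenient consistency check but does not remove the need to compute the central partial quotients explicitly. Once the full block $\left[a_{0};a_{1},\text{\dots},a_{r},2a_{0}\right]$ is known, I would fold the convergent recurrence (\ref{eq:4}) down to closed form with $p_{-1}=1$, $p_{0}=a_{0}$, $q_{-1}=0$, $q_{0}=1$, obtaining (\ref{eq:32}) for $f$ even and (\ref{eq:33}) for $f$ odd.

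The main obstacle is bookkeeping rather than any single clever idea. In Theorem 1 the period had length $1$ or $3$; here it is substantially longer, and the intermediate $b_{i}$, $c_{i}$ are rational expressions in $f$, $n$, $\alpha$ whose integrality depends delicately on \emph{both} the congruence (\ref{eq:6-1-1}) \emph{and} the parity of $f$. The hard part is to show that each floor $\left\lfloor \left(a_{0}+b_{i}\right)/c_{i}\right\rfloor $ collapses to exactly the claimed integer at every step, since a single miscomputed partial quotient would propagate through (\ref{eq:4}) and corrupt the final convergent; keeping the two parity branches and the several $\left(n,\alpha\right)$-regimes separate while doing so is the real work.
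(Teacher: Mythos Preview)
Your plan is correct and is essentially the paper's own proof: execute Sierpinski's algorithm (\ref{eq:3-1-2})--(\ref{eq:5-1-1}) on $\sqrt{f^{2}\pm2^{\alpha}n}$, branch on the sign and on the parity of $f$, track at each step the inequality needed for the displayed $a_{i}$ to be the correct floor (these collect into (\ref{eq:33-1})--(\ref{eq:33-6})), and then feed the resulting palindromic period into (\ref{eq:4}) to obtain (\ref{eq:32}) or (\ref{eq:33}). The only cosmetic difference is that the paper keeps $f$ symbolic and invokes the congruence (\ref{eq:6-1-1}) at the moments it is needed, whereas you propose to substitute $f=2^{\alpha-2}n(2k+1)$ up front; either way the computation is the same, and the paper in fact records exactly this substitution immediately after the proof. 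For orientation, the periods you will find have length $8$ and $10$ (so $r=7,9$) in the plus case and $8$ and $12$ (so $r=7,11$) in the minus case, for $f$ even and odd respectively.
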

\begin{proof}
\noindent For squarefree $D$, $n$ and $r>0,\in\mathbb{Z}$; $\alpha\geq0,\in\mathbb{Z}$;
$f\left(u\right)$ any polynomial of $u\in\mathbb{Z}$ taking values
$f=f\left(u\right)>0,\in\mathbb{Z}$, let
\begin{equation}
f\equiv\left(2^{\alpha-2}n\right)\left(mod\,\left(2^{\alpha-1}n\right)\right)\label{eq:34}
\end{equation}
Obviously for $\alpha\geq3$, $f\equiv0\left(mod\,2\right)$ always,
while for $0\leq\alpha\leq2$, $f>0,\in\mathbb{Z}$ if $n\equiv0\left(mod\,2^{2-\alpha}\right)$
and $f\equiv0\left(mod\,2\right)$ if $n\equiv0\left(mod\,2^{3-\alpha}\right)$,
or $f\equiv1\left(mod\,2\right)$ if $n\equiv2^{2-\alpha}\left(mod\,2^{3-\alpha}\right)$. 

\noindent (i) For the case of the plus sign in front of $2^{\alpha}n$
in (\ref{eq:31}), i.e.
\begin{equation}
D=\left(f^{2}+2^{\alpha}n\right)\label{eq:35}
\end{equation}
one has 
\begin{equation}
f^{2}<f^{2}+2^{\alpha}n<\left(f+1\right)^{2}\label{eq:36}
\end{equation}
under the condition that
\begin{equation}
f\geq2^{\alpha-2}n\label{eq:37}
\end{equation}
It yields successively $a_{0}=\left\lfloor \sqrt{f^{2}+2^{\alpha}n}\right\rfloor =f$,
$b_{1}=f$, $c_{1}=2^{\alpha}n$; $a_{1}=\left(\frac{f-2^{\alpha-2}n}{2^{\alpha-1}n}\right)$,
$b_{2}=\left(f-2^{\alpha-1}n\right)$, $c_{2}=\left(f-2^{\alpha-2}n+1\right)$;
$a_{2}=1$ with the condition that 
\begin{equation}
f>2^{\alpha-2}n+1\label{eq:38}
\end{equation}
Then $b_{3}=(2^{\alpha-2}n+1)$, $c_{3}=(f+2^{\alpha-2}n-1)$; $a_{3}=1$,
$b_{4}=\left(f-2\right)$, $c_{4}=4$; $a_{4}=\left\lfloor \frac{f-1}{2}\right\rfloor $. 

\noindent (i.1) If $f\equiv0\left(mod\,2\right)$, $a_{4}=\left(\frac{f-2}{2}\right)$,
$b_{5}=\left(f-2\right)$, $c_{5}=\left(f+2^{\alpha-2}n-1\right)$;
$a_{5}=1$ with the condition (\ref{eq:38}). Then $b_{6}=\left(2^{\alpha-2}n+1\right)$,
$c_{6}=\left(f-2^{\alpha-2}n+1\right)$; $a_{6}=1$ with the condition
that 
\begin{equation}
f>3\times2^{\alpha-2}n-1\label{eq:39}
\end{equation}
and $b_{7}=\left(f-2^{\alpha-1}n\right)$, $c_{7}=2^{\alpha}n$; $a_{7}=\left(\frac{f-2^{\alpha-2}n}{2^{\alpha-1}n}\right)$,
$b_{8}=f$, $c_{8}=1$; $a_{8}=2f=2a_{0}$, yielding the continued
fraction 
\begin{equation}
\sqrt{f^{2}+2^{\alpha}n}=\left[f;\left(\frac{f-2^{\alpha-2}n}{2^{\alpha-1}n}\right),1,1,\left(\frac{f-2}{2}\right),1,1,\left(\frac{f-2^{\alpha-2}n}{2^{\alpha-1}n}\right),2f,\text{\dots}\right]\label{eq:40}
\end{equation}
with $r=7$, which by (\ref{eq:4}) yields
\begin{equation}
\frac{p_{7}}{q_{7}}=\left(\frac{\frac{f^{2}\left(f^{2}+2^{\alpha}n\right)}{2^{2\alpha-3}n^{2}}+1}{\frac{f\left(f^{2}+2^{\alpha-1}n\right)}{2^{2\alpha-3}n^{2}}}\right)\label{eq:42}
\end{equation}

\noindent (i.2) If $f\equiv1\left(mod\,2\right)$, $a_{4}=\left(\frac{f-1}{2}\right)$.
Then, $b_{5}=f$, $c_{5}=2^{\alpha-2}n$; $a_{5}=\left(\frac{f}{2^{\alpha-3}n}\right)$,
$b_{6}=f$, $c_{6}=4$; $a_{6}=\left(\frac{f-1}{2}\right)$; $b_{7}=\left(f-2\right)$,
$c_{7}=\left(f+2^{\alpha-2}n-1\right)$; $a_{7}=1$ with the condition
(\ref{eq:38}), $b_{8}=\left(2^{\alpha-2}n+1\right)$, $c_{8}=\left(f-2^{\alpha-2}n+1\right)$;
$a_{8}=1$ with the condition (\ref{eq:39}), $b_{9}=\left(f-2^{\alpha-1}n\right)$,
$c_{9}=2^{\alpha}n$; $a_{9}=\left(\frac{f-2^{\alpha-2}n}{2^{\alpha-1}n}\right)$,
$b_{10}=f$, $c_{10}=1$; $a_{10}=2f=2a_{0}$, yielding the continued
fraction
\begin{eqnarray}
\sqrt{f^{2}+2^{\alpha}n} & = & \left[f;\left(\frac{f-2^{\alpha-2}n}{2^{\alpha-1}n}\right),1,1,\left(\frac{f-1}{2}\right),\left(\frac{f}{2^{\alpha-3}n}\right),\right.\nonumber \\
 &  & \left.\left(\frac{f-1}{2}\right),1,1,\left(\frac{f-2^{\alpha-2}n}{2^{\alpha-1}n}\right),2f,\text{\dots}\right]\label{eq:43}
\end{eqnarray}
with $r=9$, which by (\ref{eq:4}) yields
\begin{equation}
\frac{p_{9}}{q_{9}}=\left(\frac{\frac{f^{2}\left(f^{2}+3\times2^{\alpha-2}n\right)^{2}}{2^{3\alpha-5}n^{3}}+1}{\frac{f\left(f^{2}+2^{\alpha-2}n\right)\left(f^{2}+3\times2^{\alpha-2}n\right)}{2^{3\alpha-5}n^{3}}}\right)\label{eq:45}
\end{equation}
Summarizing the conditions (\ref{eq:37}) to (\ref{eq:39}) for $f\equiv0\left(mod\,2\right)$
or $f\equiv1\left(mod\,2\right)$, one has that 

\noindent - if $1\leq n\leq2^{2-\alpha}$, then
\begin{equation}
f\ge2^{\alpha-1}n\label{eq:46}
\end{equation}

\noindent - if $n>2^{2-\alpha}$, then
\begin{equation}
f>3\times2^{\alpha-2}n-1\label{eq:47}
\end{equation}

\noindent (ii) For the case of the minus sign in front of $2^{\alpha}n$
in (\ref{eq:31}), i.e.
\begin{equation}
D=\left(f^{2}-2^{\alpha}n\right)\label{eq:48}
\end{equation}
one has
\begin{equation}
\left(f-1\right)^{2}<f^{2}-2^{\alpha}n<f^{2}\label{eq:49}
\end{equation}
giving the same condition as (\ref{eq:37}). It yields successively
$a_{0}=\left\lfloor \sqrt{f^{2}-2^{\alpha}n}\right\rfloor =\left(f-1\right)$,
$b_{1}=\left(f-1\right)$, $c_{1}=\left(2f-2^{\alpha}n-1\right)$;
$a_{1}=1$ with the condition that
\begin{equation}
f>2^{\alpha}n\label{eq:50}
\end{equation}
Then, $b_{2}=\left(f-2^{\alpha}n\right)$, $c_{2}=2^{\alpha}n$; $a_{2}=\left(\frac{f-3\times2^{\alpha-2}n}{2^{\alpha-1}n}\right)$,
$b_{3}=\left(f-2^{\alpha-1}n\right)$, $c_{3}=\left(f-2^{\alpha-2}n-1\right)$;
$a_{3}=2$ with the condition that
\begin{equation}
f>2^{\alpha-2}n+2\label{eq:51}
\end{equation}
and $b_{4}=\left(f-2\right)$, $c_{4}=4$; $a_{4}=\left\lfloor \frac{2f-3}{4}\right\rfloor $. 

\noindent (ii.1) If $f\equiv0\left(mod\,2\right)$, $a_{4}=\left(\frac{f-2}{2}\right)$.
Then, $b_{5}=\left(f-2\right)$, $c_{5}=\left(f-2^{\alpha-2}n-1\right)$;
$a_{5}=2$ with the condition that
\begin{equation}
f>3\times2^{\alpha-2}n\label{eq:52}
\end{equation}
Then, $b_{6}=\left(f-2^{\alpha-1}n\right)$, $c_{6}=2^{\alpha}n$;
$a_{6}=\left(\frac{f-3\times2^{\alpha-2}n}{2^{\alpha-1}n}\right)$,
$b_{7}=\left(f-2^{\alpha}n\right)$, $c_{7}=(2f-2^{\alpha}n-1)$;
$a_{7}=1$, $b_{8}=\left(f-1\right)$, $c_{8}=1$; $a_{8}=2\left(f-1\right)=2a_{0}$,
yielding the continued fraction
\begin{eqnarray}
\sqrt{f^{2}-2^{\alpha}n} & = & \left[\left(f-1\right);1,\left(\frac{f-3\times2^{\alpha-2}n}{2^{\alpha-1}n}\right),2,\left(\frac{f-2}{2}\right),\right.\nonumber \\
 &  & \left.2,\left(\frac{f-3\times2^{\alpha-2}n}{2^{\alpha-1}n}\right),1,2\left(f-1\right),\text{\dots}\right]\label{eq:53}
\end{eqnarray}
with $r=7$, which by (\ref{eq:4}) yields
\begin{equation}
\frac{p_{7}}{q_{7}}=\left(\frac{\frac{f^{2}\left(f^{2}-2^{\alpha}n\right)}{2^{2\alpha-3}n^{2}}+1}{\frac{f\left(f^{2}-2^{\alpha-1}n\right)}{2^{2\alpha-3}n^{2}}}\right)\label{eq:55}
\end{equation}
Summarizing the conditions (\ref{eq:37}) and (\ref{eq:50}) to (\ref{eq:52})
for $f\equiv0\left(mod\,2\right)$, one has that 

\noindent - if $\alpha\geq2$, or if $\alpha=0$ or $1$ and $3n\geq2^{3-\alpha}$,
then
\begin{equation}
f\geq2^{\alpha}n\label{eq:56}
\end{equation}

\noindent - if $\alpha=0$ or $1$ and $3n<2^{3-\alpha}$, then
\begin{equation}
f>2^{\alpha-2}n+2\label{eq:57}
\end{equation}

\noindent (ii.2) If $f\equiv1\left(mod\,2\right)$, $a_{4}=\left(\frac{f-3}{2}\right)$,
$b_{5}=\left(f-4\right)$, $c_{5}=\left(2f-2^{\alpha-2}n-4\right)$;
$a_{5}=1$ with the condition that
\begin{equation}
f\geq2^{\alpha-1}n+1\label{eq:58}
\end{equation}
and $b_{6}=\left(f-2^{\alpha-2}n\right)$, $c_{6}=2^{\alpha-2}n$;
$a_{6}=\left(\frac{f-2^{\alpha-2}n}{2^{\alpha-3}n}\right)$; $b_{7}=\left(f-2^{\alpha-2}n\right)$,
$c_{7}=\left(2f+2^{\alpha-2}n-4\right)$; $a_{7}=1$ with the condition
that
\begin{equation}
f\geq2^{\alpha-3}n+3\label{eq:59}
\end{equation}
Then $b_{8}=\left(f-4\right)$, $c_{8}=4$; $a_{8}=\left(\frac{f-3}{2}\right)$,
$b_{9}=\left(f-2\right)$, $c_{9}=\left(f-2^{\alpha-2}n-1\right)$;
$a_{9}=2$ with the condition (\ref{eq:52}), $b_{10}=\left(f-2^{\alpha-1}n\right)$,
$c_{10}=2^{\alpha}n$; 

\noindent $a_{10}=\left(\frac{f-3\times2^{\alpha-2}n}{2^{\alpha-1}n}\right)$,
$b_{11}=\left(f-2^{\alpha}n\right)$, $c_{11}=\left(2f-2^{\alpha}n-1\right)$;
$a_{11}=1$, $b_{12}=\left(f-1\right)$, $c_{12}=1$; $a_{12}=2\left(f-1\right)=2a_{0}$,
yielding the continued fraction 
\begin{eqnarray}
\sqrt{f^{2}-2^{\alpha}n} & = & \left[\left(f-1\right);1,\left(\frac{f-3\times2^{\alpha-2}n}{2^{\alpha-1}n}\right),2,\left(\frac{f-3}{2}\right),1,\right.\nonumber \\
 &  & \,\,\left(\frac{f-2^{\alpha-2}n}{2^{\alpha-3}n}\right),1,\left(\frac{f-3}{2}\right),2,\nonumber \\
 &  & \left.\left(\frac{f-3\times2^{\alpha-2}n}{2^{\alpha-1}n}\right),1,2\left(f-1\right),\text{\dots}\right]\label{eq:60}
\end{eqnarray}
with $r=11$, which by (\ref{eq:4}) yields
\begin{equation}
\frac{p_{11}}{q_{11}}=\left(\frac{\frac{f^{2}\left(f^{2}-3\times2^{\alpha-2}n\right)^{2}}{2^{3\alpha-5}n^{3}}-1}{\frac{f\left(f^{2}-2^{\alpha-2}n\right)\left(f^{2}-3\times2^{\alpha-2}n\right)}{2^{3\alpha-5}n^{3}}}\right)\label{eq:62}
\end{equation}
Summarizing the conditions (\ref{eq:37}), (\ref{eq:58}), (\ref{eq:59})
and (\ref{eq:52}) for $f\equiv1\left(mod\,2\right)$ and as $0\leq\alpha\leq2$,
one has that 

\noindent - if $n\geq2^{2-\alpha}+1$, then
\begin{equation}
f\geq3\times2^{\alpha-2}n\label{eq:63}
\end{equation}

\noindent - if $1\leq n\leq2^{2-\alpha}$, then
\begin{equation}
f>2^{\alpha-3}n+3\label{eq:64}
\end{equation}

\end{proof}
\noindent Like above, note that the conditions (\ref{eq:6-1-1}) and
$f>0$ means that $\exists\, k>0,\in\mathbb{Z}$ such that $f=2^{\alpha-2}\left(2k+1\right)n$,
yielding from (\ref{eq:31})
\begin{equation}
D=2^{\alpha}n\left(2^{\alpha-4}\left(2k+1\right)^{2}n\pm1\right)\label{eq:7-2-1}
\end{equation}
and for the plus sign in front of $1$ in (\ref{eq:7-2-1}), conditions
(\ref{eq:33-1}) and (\ref{eq:33-2}) reduce to $k\geq1$, and for
the minus sign in front of $1$ in (\ref{eq:7-2-1}), for $f\equiv0\left(mod\,2\right)$,
(\ref{eq:33-3}) and (\ref{eq:33-4}) reduce respectively to $k\geq0$
and $k\geq2$, and for $f\equiv1\left(mod\,2\right)$, (\ref{eq:33-5})
and (\ref{eq:33-6}) reduce respectively to $k\geq1$ and $k\geq2$.
Expressions simpler than (\ref{eq:32}) and (\ref{eq:33}) for the
convergents $\left(p_{r}/q_{r}\right)$ of $\sqrt{D}$ are then
\begin{equation}
\frac{p_{r}}{q_{r}}=\left(\frac{2^{\alpha-1}\left(2k+1\right)^{2}n\left(2^{\alpha-4}\left(2k+1\right)^{2}n\pm1\right)+1}{\left(2k+1\right)\left(2^{\alpha-3}\left(2k+1\right)^{2}n\pm1\right)}\right)\label{eq:7-3-1}
\end{equation}
if $f=2^{\alpha-2}\left(2k+1\right)n\equiv0\left(mod\,2\right)$ (i.e.
for either $\alpha\geq3$, or if $\alpha=0$, $n\equiv0\left(mod\,8\right)$,
or if $\alpha=1$, $n\equiv0\left(mod\,4\right)$, or if $\alpha=2$,
$n\equiv0\left(mod\,2\right)$), and
\begin{equation}
\frac{p_{r}}{q_{r}}=\left(\frac{2^{\alpha-3}\left(2k+1\right)^{2}n\left(2^{\alpha-2}\left(2k+1\right)^{2}n\pm3\right)\pm1}{\frac{1}{2}\left(2k+1\right)\left(2^{\alpha-2}\left(2k+1\right)^{2}n\pm1\right)\left(2^{\alpha-2}\left(2k+1\right)^{2}n\pm3\right)}\right)\label{eq:7-3-1-1}
\end{equation}
if $f=2^{\alpha-2}\left(2k+1\right)n\equiv1\left(mod\,2\right)$ (i.e.
for $\alpha\leq2$, if $\alpha=0$, $n\equiv4\left(mod\,8\right)$,
or if $\alpha=1$, $n\equiv2\left(mod\,4\right)$, or if $\alpha=2$,
$n\equiv1\left(mod\,2\right)$).

\noindent Several particular relations can also be deduced from the
general cases given in this Theorem.
\begin{cor}
\noindent For squarefree $D$, $d$, $m$, $n$ and $r>0,\in\mathbb{Z}$,
$d\equiv1\left(mod\,2\right)$; $\alpha\geq0,\in\mathbb{Z}$; and
for appropriate conditions as in Theorem 3, the following relations
hold for the convergents $\left(p_{r}/q_{r}\right)$ of $\sqrt{D}$
: 

\noindent (i) for $D=4d\left(d\pm2\right)$ and, in the case of the
minus sign, $d\geq3$,
\begin{equation}
\frac{p_{r}}{q_{r}}=\left(\frac{2d\left(d\pm2\right)+1}{d\pm1}\right)\label{eq:65}
\end{equation}

\noindent (ii) for $D=4\left(d^{2}\pm2\right)$ and, in the case of
the minus sign, $d\geq3$,
\begin{equation}
\frac{p_{r}}{q_{r}}=\left(\frac{2d^{2}\left(d^{2}\pm2\right)+1}{d\left(d^{2}\pm1\right)}\right)\label{eq:66}
\end{equation}

\noindent (iii) for $D=16n\left(nd^{2}\pm1\right)$ and, in the case
of the minus sign, $nd>1$,
\begin{equation}
\frac{p_{r}}{q_{r}}=\left(\frac{8nd^{2}\left(nd^{2}\pm1\right)+1}{d\left(2nd^{2}\pm1\right)}\right)\label{eq:67}
\end{equation}

\noindent (iv) for $D=4n\left(nd^{2}\pm2\right)$ and, in the case
of the minus sign, $nd>2$,
\begin{equation}
\frac{p_{r}}{q_{r}}=\left(\frac{2nd^{2}\left(nd^{2}\pm2\right)+1}{d\left(nd^{2}\pm1\right)}\right)\label{eq:68}
\end{equation}

\noindent (v) for $D=\left(d^{2}\pm4\right)$ and, in the case of
the minus sign, $d\geq3$
\begin{equation}
\frac{p_{r}}{q_{r}}=\left(\frac{\frac{d^{2}\left(d^{2}\pm3\right)^{2}}{2}\pm1}{\frac{d\left(d^{2}\pm1\right)\left(d^{2}\pm3\right)}{2}}\right)\label{eq:69}
\end{equation}

\noindent (vi) for $D=n\left(nd^{2}\pm4\right)$, $n\equiv1\left(mod\,2\right)$
and, in the case of the minus sign, $nd^{2}>2$,
\begin{equation}
\frac{p_{r}}{q_{r}}=\left(\frac{\frac{nd^{2}\left(nd^{2}\pm3\right)^{2}}{2}\pm1}{\frac{d\left(nd^{2}\pm1\right)\left(nd^{2}\pm3\right)}{2}}\right)\label{eq:70}
\end{equation}
\end{cor}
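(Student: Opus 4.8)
The plan is to obtain each relation exactly as Corollary 1 was deduced from Theorem 1: in every part I would specialise the free parameters of Theorem 3, writing $n$ for the parameter of \eqref{eq:31} (distinct from the corollary's own $n$), exhibit the decomposition $D=f^{2}\pm2^{\alpha}n$, verify the congruence \eqref{eq:6-1-1} together with the parity of $f$, and then read the convergent off \eqref{eq:32} when $f$ is even or \eqref{eq:33} when $f$ is odd. Concretely, for the even branch I would take $f=2d$ with $2^{\alpha}n=8$ (so $\alpha=3$, $n=1$) in (ii), $f=4nd$ with $2^{\alpha}n=16n$ ($\alpha=4$) in (iii), and $f=2nd$ with $2^{\alpha}n=8n$ ($\alpha=3$) in (iv); for the odd branch $f=d$ with $2^{\alpha}n=4$ ($\alpha=2$, $n=1$) in (v) and $f=nd$ with $2^{\alpha}n=4n$ ($\alpha=2$) in (vi). Part (i) I would treat separately as the degenerate instance $4d(d\pm2)=\bigl(2(d\pm1)\bigr)^{2}-4$, which is the minus-sign case of Theorem 1 with $f=2(d\pm1)$ and $2^{\alpha}n=4$, so that \eqref{eq:7-1-1} returns the stated pair $\tfrac{2d(d\pm2)+1}{d\pm1}$ directly.

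The first step in each part is to confirm the hypotheses. Substituting the chosen $f$ and $2^{\alpha}n$, the congruence \eqref{eq:6-1-1} collapses in every case to the single demand that the corollary's $d$ be odd — and, in (vi), that its $n$ be odd as well — which is exactly the standing assumption $d\equiv1\pmod 2$; this is the only place that hypothesis enters. The resulting parity of $f$ then matches the even/odd dichotomy recorded at the start of the proof of Theorem 3 for the chosen $\alpha$ and $n$, selecting \eqref{eq:32} or \eqref{eq:33}. The side inequalities listed in the corollary ($d\geq3$ in (i),(ii),(v), $nd>1$ in (iii), $nd>2$ in (iv), $nd^{2}>2$ in (vi)) I would recover by inserting the same parameters into \eqref{eq:33-3}--\eqref{eq:33-6} and into the requirement $D>0$; they are precisely the thresholds keeping the intermediate partial quotients positive in the minus-sign branches.

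The remaining labour is the simplification, which is routine but needs care. In the even cases the common factor $2^{2\alpha-3}n^{2}$ of \eqref{eq:32} (equal, say, to $8n^{2}$ in (iv)) cancels against the numerator and denominator, leaving the compact fractions with a single $+1$ on top; in the odd cases the factor $2^{3\alpha-5}n^{3}$ of \eqref{eq:33} (equal to $2$ in (v) and $2n^{3}$ in (vi)) cancels the triple product and produces the nested factors $(d^{2}\pm1)(d^{2}\pm3)$ and $(nd^{2}\pm1)(nd^{2}\pm3)$. Equivalently I could set $k=(d-1)/2$, so that $f=2^{\alpha-2}(2k+1)n$, and substitute into the reduced formulas \eqref{eq:7-3-1} and \eqref{eq:7-3-1-1}, reaching the same expressions; a direct check that the resulting pair satisfies $X^{2}-DY^{2}=1$ gives an independent verification of each line.

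The real obstacle is the boundary behaviour of the continued fraction, since the expansions underlying \eqref{eq:32} and \eqref{eq:33} assume the full periods $r=7,9,11$. At the chosen parameters these periods collapse: a middle partial quotient can coincide with $2a_{0}$ (this occurs whenever $2^{\alpha-2}n=1$, which is exactly the situation of (v)), halving the period, and at the smallest values — for instance $\sqrt5=[2;\overline4]$ when $d=1$ — it is shorter still. Consequently the formula may land on a convergent beyond the first period and deliver a higher solution rather than the fundamental one: for $D=d^{2}-4$ the minus-sign form of \eqref{eq:33} returns the square of the fundamental unit (e.g. $161/72$ for $d=3$, the second solution of $X^{2}-5Y^{2}=1$). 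I would therefore check each minimal case by computing the shortened continued fraction explicitly, recording which convergent \eqref{eq:32} or \eqref{eq:33} actually produces, so that the stated relation is asserted for precisely that convergent; pinning down these degeneracies, rather than the algebra, is where the difficulty lies.
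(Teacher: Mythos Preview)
Your plan is essentially the paper's own: each item is obtained by specialising the parameters $f,\alpha,n$ in Theorem~3 and reading the convergent off \eqref{eq:32} or \eqref{eq:33}, with the oddness of $d$ (and of $n$ in (vi)) being exactly what forces the congruence \eqref{eq:6-1-1}. The paper records the same substitutions (writing $f(d)=md^{\beta}$ and listing $m,\beta,\alpha,n$ for each case) and simply says ``immediate'', without your checks of the side inequalities or of the algebraic simplification.

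The one substantive divergence is part~(i). You rewrite $4d(d\pm2)=\bigl(2(d\pm1)\bigr)^{2}-4$ and invoke Theorem~1, whereas the paper keeps it inside Theorem~3 by taking $f=2d$, $\alpha=3$ and (theorem's) $n=d$, so that $D=(2d)^{2}\pm8d$ and \eqref{eq:32} produces \eqref{eq:65}. Both routes are valid; yours is slightly more economical because Theorem~1 yields the pair after a period of length~$2$ or~$4$ rather than~$8$, and avoids having the theorem's $n$ depend on $d$. Your closing discussion of collapsed periods (e.g.\ that \eqref{eq:69} with the minus sign returns $161/72$ for $d=3$, the second rather than the fundamental solution of $X^{2}-5Y^{2}=1$) is correct and is not addressed in the paper's proof; it does not contradict the corollary as stated, since the asserted ratios are still genuine convergents of $\sqrt{D}$, but it is a real caveat if one reads the corollary as supplying fundamental solutions.
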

\begin{proof}
\noindent For squarefree $D$, $d$, $m$, $n$, $n'$ and $r>0,\in\mathbb{Z}$,
$d\equiv1\left(mod\,2\right)$, $\alpha$ and $\beta\geq0,\in\mathbb{Z}$;
let
\begin{equation}
f\left(d\right)=md^{\beta}\label{eq:30-1-1}
\end{equation}
such that $f\left(d\right)>0,\in\mathbb{Z}$. One has then the following
from Theorem 3, yielding the respective convergents by (\ref{eq:4}): 

\noindent (i) immediate for $m=2$, $\beta=1$, $\alpha=3$ and $n=d$
in (\ref{eq:30-1-1}) and (\ref{eq:32}).

\noindent (ii) immediate for $m=2$, $\beta=1$,$\alpha=3$ and $n=1$
in (\ref{eq:30-1-1}) and (\ref{eq:32}).

\noindent (iii) immediate for $m=2n$, $\beta=1$,$\alpha=3$ and
$n=2n'$ and dropping the prime sign $'$, in (\ref{eq:30-1-1}) and
(\ref{eq:32}).

\noindent (iv) immediate for $m=2n$, $\beta=1$ and $\alpha=3$ in
(\ref{eq:30-1-1}) and (\ref{eq:32}).

\noindent (v) immediate for $m=1$, $\beta=1$, $\alpha=2$ and $n=1$
in (\ref{eq:30-1-1}) and (\ref{eq:33}).

\noindent (vi) immediate for $m=n$, $\beta=1$ and $\alpha=2$ in
(\ref{eq:30-1-1}) and (\ref{eq:33}).
\end{proof}
\noindent For the cases (i) to (iv), it is easy to see that $D$ is
always $4\left(mod\,8\right)$. From case (iv), generalizing cases
(i) to (iii), $D=4n\left(nd^{2}\pm2\right)=\left(\left(2nd\right)^{2}\pm8n\right)$
and it is easy to see that, as $d\equiv1\left(mod\,2\right)$, if
$n\equiv0$ or $1\left(mod\,2\right)$ then $\left(2nd\right)\equiv0$
or $2\left(mod\,4\right)$, i.e. solutions will be found for even
integer squares $\left(2nd\right)^{2}$ different from powers of 2
($\left(2nd\right)^{2}\neq2^{\alpha}$), plus or minus even or odd
multiples of $8$ depending on the $0$ or $2\left(mod\,4\right)$
value of $2nd$. For example, for $n\equiv1\left(mod\,2\right)$,
$D=6^{2}\pm8$, $10^{2}\pm8$, $14^{2}\pm8$, ... and also $D=30^{2}\pm8$,
$30^{2}\pm24$, $30^{2}\pm40$, ...; for $n\equiv0\left(mod\,2\right)$,
$D=12^{2}\pm16$, $20^{2}\pm16$, $24^{2}\pm32$, $28^{2}\pm16$,
...

\noindent The case (v) of $D=\left(d^{2}+4\right)$ with $d\equiv1\left(mod\,2\right)$
is interesting as it gives the solutions for all the values of $D$
of the form $D=\left(8\triangle\left(k\right)+5\right)=\left(\left(2k+1\right)^{2}+4\right)$
with $k\geq0,\in\mathbb{Z}$, i.e. $D\equiv5\left(mod\,8\right)$
and $\frac{\left(D-5\right)}{8}=\triangle\left(k\right)$, where $\triangle\left(k\right)$
is the triangular number of $k$. Similarly, the case (v) of $D=\left(d^{2}-4\right)$
with $d\equiv1\left(mod\,2\right)$ gives the solutions for all the
values of $D$ of the form $D=\left(8\triangle\left(k\right)-3\right)=\left(\left(2k+1\right)^{2}-4\right)$
with $k\geq1,\in\mathbb{Z}$, i.e. $D\equiv-3\left(mod\,8\right)$
and $\frac{\left(D+3\right)}{8}=\triangle\left(k\right)$. For example,
$D=3^{2}+4$, $5^{2}\pm4$, $7^{2}\pm4$, $9^{2}\pm4$, ...

\noindent The case (vi) is also interesting as it generalizes the
previous case (v). $D$ can indistinctly be written as $D=n\left(nd^{2}\pm4\right)=\left(8\triangle\left(\frac{nd-1}{2}\right)+1\pm4n\right)$,
which is always $D\equiv5\left(mod\,8\right)$ as $d$ and $n\equiv1\left(mod\,2\right)$.
For example, $D=9^{2}\pm12$, $15^{2}\pm12$, $15^{2}\pm20$, $21^{2}\pm12$,
$21^{2}\pm28$, ...

\section{\noindent Conclusions}

\noindent The case considered in these two theorems of any polynomial
function $f\left(u\right)$ of $u\in\mathbb{Z}$ can easily be extended
to any function $g\left(x\right)$ of $x\in\mathbb{\mathbb{R}}$ or
$h\left(z\right)$ of $z\in\mathbb{\mathbb{C}}$ as long as $g\left(x\right)$
or $h\left(z\right)>0,\in\mathbb{Z}$ for $\forall x\in\mathbb{\mathbb{R}}$
or $\forall z\in\mathbb{\mathbb{\mathbb{C}}}$. 

\noindent These two general theorems and the various relations in
the two corollaries allow the immediate calculation of the fundamental
solutions of the simple Pell equation for most of the values of $D$.
A quick survey shows that these two theorems provide solutions for
all squarefree integer values of $D$ for $D\leq10$, for 72.2\% of
the values of $D$ for $D<10^{2}$, for 35.4\% for $D\leq10^{3}$,
and for 15.7\% for $D<10^{4}$.

\end{document}